
\documentclass[12pt]{article}
\usepackage{amsmath,amssymb,amsfonts}
\usepackage{epsfig}
\usepackage[]{hyperref}
 \hypersetup{colorlinks=true, citecolor=black,%
                 filecolor=black,%
                 linkcolor=black,%
                 urlcolor=black,  pdftex}

\newtheorem{theorem}{Theorem}[section]
\newtheorem{lemma}[theorem]{Lemma}

\newtheorem{proposition}[theorem]{Proposition}

\newtheorem{corollary}[theorem]{Corollary}

\newtheorem{claim}{Claim}

\newenvironment{proof}{\noindent\emph{Proof.}\hspace{.25em}}{\hspace*{\fill}
$\Box$\newline}


\def\pfsp{\hskip 1em}

%

\newcommand{\pfcl}[1]{\noindent{\it Proof\,:}\pfsp #1 \QED \smallskip}




\def \QED {\hfill $\triangle$}      
\def \QD1 {\hfill $\spadesuit$}

\newcommand{\DF}[1]{{\bf #1\/}}

\newcommand{\sm}{\setminus}

\newcommand{\De}{\Delta}

\newcommand{\col}{{\rm col}}

\newcommand{\cn}{\chi}

\newcommand{\cV}{{\cal V}}

\newcommand{\cF}{{\cal F}}

\newcommand{\nato}{\mathbb{N}_0}

\numberwithin{equation}{section}

\begin{document}
\title{\bf Vertex partition of hypergraphs and maximum degenerate subhypergraphs}

\author{{{
Thomas Schweser}\thanks{The authors thank the Danish Research Council for support through the program Algodisc.}
\thanks{
Technische Universit\"at Ilmenau, Inst. of Math., PF 100565, D-98684 Ilmenau, Germany. E-mail
address: thomas.schweser@tu-ilmenau.de}}
\and{{Michael Stiebitz}\footnotemark[1]
\thanks{
Technische Universit\"at Ilmenau, Inst. of Math., PF 100565, D-98684 Ilmenau, Germany. E-mail
address: michael.stiebitz@tu-ilmenau.de}}
}

\date{}
\maketitle

\begin{abstract}
\noindent In 2007 Matamala proved that if $G$ is a simple graph with maximum degree $\De\geq 3$ not containing $K_{\De+1}$ as a subgraph and $s, t$ are positive integers such that $s+t \geq \De$, then the vertex set of $G$ admits a partition $(S,T)$ such that $G[S]$ is a maximum order $(s-1)$-degenerate subgraph of $G$ and $G[T]$ is a $(t-1)$-degenerate subgraph of $G$. This result extended earlier results obtained by Borodin, by Bollob\'as and Manvel, by Catlin, by Gerencs\'{e}r and by Catlin and Lai. In this paper we prove a hypergraph version of this result and extend it to variable degeneracy and to partitions into more than two parts, thereby extending a result by Borodin, Kostochka, and Toft.
\end{abstract}

\noindent{\small{\bf AMS Subject Classification:} 05C15 }

\noindent{\small{\bf Keywords:} Hypergraph decomposition, Vertex partition, Degeneracy, Coloring of hypergraphs }

\section{Introduction and main results}

The paper deals with partition of hypergraphs into a fixed number of subhypergraphs so that each part satisfies a certain degree condition. Graphs and hypergraphs considered in this paper may have parallel edges, but no loops. We will mainly use the notation from the paper \cite{SchweserS2017}. Let $G$ be a hypergraph. As usual, we denote by $V(G)$ the \DF{vertex set} of $G$ and by $E(G)$ the \DF{edge set} of $G$. For a vertex $v$ of $G$, let $E_G(v)$ denote the set of edges of $G$ that are \DF{incident} with $v$ in $G$. Then $d_G(v)=|E_G(v)|$ is the \DF{degree} of $v$ in $G$, and $\De(G)=\max_{v\in V(G)}d_G(v)$ is the \DF{maximum degree} of $G$. Given two vertices $v \neq w$ of a hypergraph $G$, a $(v,w)$-\DF{hyperpath of length} $q$ in $G$ is a sequence
$(v_1, e_1, v_2, e_2, \ldots, v_q, e_q, v_{q+1})$ of distinct vertices
$v_1, v_2, \ldots, v_{q+1}$ of $G$ and distinct edges
$e_1, e_2, \ldots, e_q$ of $G$ such that $v_1=v$, $v_{q+1}=u$, and $e_i \in E_G(v_i) \cap E_G(v_{i+1})$
 for $i \in \{1, 2, \ldots, q\}$. By $\text{dist}_G(v,w)$ we denote the length of a shortest $(v,w)$-hyperpath in $G$. The hypergraph $G$ is connected if for any two vertices $v,w$ of $G$ there is a $(v,w)$-hyperpath in $G$. A (connected) \DF{component} of a nonempty hypergraph $G$ is a maximal connected subhypergraph.

For a vertex set $X\subseteq V(G)$, we denote by $G[X]$ the subhypergraph of $G$ induced by $X$, that is, the hypergraph whose vertex set is $X$ and whose edges are all edges of $G$ that are incident only to vertices in $X$. Furthermore, $G-X=G[V(G)\sm X]$. If $X=\{v\}$ is a singleton, then we also write $G-v$ instead of $G-X$. A subgraph $H$ of $G$ is an \DF{induced subhypergraph} of $G$ if $V(H)\subseteq V(G)$ and $H=G[V(H)]$. If $H$ is an induced subhypergraph of $G$ and $v\in V(G)$, then $H+v=G[V(H) \cup \{v\}]$. A \DF{partition} of a hypergraph $G$ is a sequence of induced subhypergraphs of $G$ (possibly empty) such that each vertex belongs to exactly one hypergraph of the sequence.

The first result dealing with partition of graphs under degree constraints was obtained in 1966 by Lov\'asz \cite{Lovasz66}. He proved that if $G$ is a simple graph and $d_1, d_2, \ldots, d_p$ are non-negative integers such that $d_1+d_2+\cdots + d_p\geq  \De(G)-p+1$, then there is a partition $(G_1,G_2, \ldots , G_p)$ of $G$ such that $\De(G_i)\leq d_i$ for $1 \leq i \leq p$. It is easy to see that Lov\'asz's partition result also holds for hypergraphs; one can apply the same simple argument as in Lov\'asz's original proof. Csisz\'ar and K\"orner \cite{ZsiszarK81} used Lov\'asz's argument to derive a continues version of his partition result for edge weighted graphs; they used this result for proving coding theorems. A variable version of Lov\'asz's result was obtained in 1977 by Borodin and Kostochka \cite{BorodinK77}. They proved that if $G$ is a simple graph and $f_1, f_2, \ldots, f_p:V(G)\to \nato$ are $p$ vertex functions such that $f_1(v)+f_2(v)+\cdots + f_p(v)\geq d_G(v)-p+1$ for all $v\in V(G)$, then $G$ has a partition $(G_1,G_2, \ldots, G_p)$ such that $d_{G_i}(v)\leq f_i(v)$ whenever $v\in V(G_i)$ and $i\in \{1,2, \ldots, p\}$. Also this result can easily be extended to hypergraphs.

The \DF{coloring number} $\col(G)$ of a non-empty hypergraph $G$ is $1$ plus the maximum minimum degree of the subhypergraphs of $G$. If $G$ is the empty hypergraph  (that is, $V(G)=E(G)=\varnothing$), we set $\text{col}(G)=0$. So if $d$ is a non-negative integer, then $\col(G)\leq d$ if and only if every non-empty subhypergraph of $G$ contains a vertex of degree at most $d-1$. In particular, $\col(G) \leq 0$ if and only if $G$ is empty and $\col(G)\leq 1$ if and only if $G$ is edgeless.

Borodin \cite{Borodin76} and, independently, Bollob\'as and Manvel \cite{BollobasM79} proved that if $G$ is a connected simple graph with maximum degree $\De\geq 3$ different from $K_{\De+1}$ and $d_1, d_2, \ldots, d_p$ are positive integers such that $d_1+d_2+\cdots + d_p\geq \De$, then $G$ has a partition $(G_1, G_2, \ldots, G_p)$ such that $\col(G_i)\leq d_i$ for $1\leq i \leq p$. The famous theorem of Brooks \cite{Brooks49}, saying that a connected simple graph with maximum degree $\De\geq 3$ satisfies $\cn(G)\leq \De+1$ and equality holds if and only if $G=K_{\De+1}$, follows from the former result by taking $p=\De$ and $d_i=1$ for $1\leq i \leq p$. Here $\cn(G)$ denotes the \DF{chromatic number} of $G$, that is, the least integer $p$ such that $G$ has a partition into $p$ edgeless subgraphs. The cases of point aboricity (which correspond to $d_1=d_2= \cdots =d_p=2$), and of point partiton numbers in general (which corresponds to $d_1=d_2= \cdots =d_p$) were solved by Kronk and Mitchem \cite{KronkM75}, and Mitchem \cite{Mitchem77}. The point partition number was introduced by Lick and White \cite{LickW72}.

A variable version of the result by Borodin, respectively Bollob\'as and Manvel, was obtained in 2000 by Borodin, Kostochka, and Toft \cite{BorodinKT2000} for simple graphs. Schweser and Stiebitz \cite{SchweserS2017} extended this result to hypergraphs. Let $G$ be a hypergraph, and let $h:V(G) \to \nato$ be a function from the vertex set of $G$ into the set of non-negative integers. The hypergraph $G$ is said to be \DF{strictly $h$-degenerate} if every non-empty subhypergraph $H$ of $G$ has a vertex $v$ such that $d_H(v)\leq h(v)-1$. Note that if $h(v)\equiv d$ is the constant function, then $G$ is strictly $h$-degenerate if and only if $\col(G)\leq d$. Degeneracy of graphs was introduced by Lick and White \cite{LickW70}. The hypergraph $G$ is called \DF{$h$-regular} if $d_G(v)=h(v)$ for all $v\in V(G)$. 
%

Let $G$ be an arbitrary hypergraph. A function $f:V(H) \to \nato^p$ is called a \DF{vector function} of $G$. By $f_i$ we name the $i$th coordinate of $f$, i.e., $f=(f_1,f_2, \ldots, f_p)$. The set of all vector functions of $G$ with $p$ coordinates is denoted by $\cV_p(G)$. For $f\in \cV_p(H)$, an \DF{$f$-partition} of $G$ is a partition $(G_1,G_2, \ldots, G_p)$ of $G$ such that $G_i$ is strictly $f_i$-degenerate for all $i\in \{1,2, \ldots, p\}$. If the hypergraph $G$ admits an $f$-partition, then $G$ is said to be \DF{$f$-partitionable}.

Recall that a \DF{block} of a hypergraph $G$ is a maximal connected subhypergraph of $G$ without a separating vertex. If $G$ itself has no separating vertex, $G$ is said to be a block. For a simple graph $H$ and an integer $t\geq 1$, let $G=tH$ denote the graph obtained from $H$ by replacing each edge of $H$ by $t$ parallel edges.

Let $G$ be a connected hypergraph and let $f \in \mathcal{V}_p(G)$ be a vector-function for some integer $p \geq 1$. We say that $(G,f)$ is a \textbf{hard pair} if one of the following four conditions holds.

\begin{itemize}
\item[(1)] $G$ is a block and there exists an index $j \in \{1,2,\ldots,p\}$ such that
$$f_i(v)=
\begin{cases}
d_H(v) & \text{if } i=j,\\
0 & \text{otherwise}
\end{cases}$$
for all $i \in \{1,2,\ldots,p\}$ and for each $v \in V(G)$. 
\item[(2)] $G=tK_n$ for some $t \geq 1, n \geq 3$ and there are integers $n_1,n_2,\ldots,n_p \geq 0$ with at least two $n_i$ different from zero such that $n_1 + n_2 + \ldots + n_p=n-1$ and that
$$f(v)=(tn_1,tn_2,\ldots,tn_p)$$
for all $v \in V(G)$.
\item[(3)] $G=tC_n$ with $t \geq 1$ and $n \geq 5$ odd and there are two indices $k \neq \ell$ from the set $\{1,2,\ldots,p\}$ such that
$$f_i(v)=
\begin{cases}
t & \text{if } i \in \{k,\ell\}, \\
0 & \text{otherwise}
\end{cases}
$$ for all $i \in \{1,2,\ldots,p\}$ and for each $v \in V(G)$. In this case, we say that $G$ is a block of type \textbf{(C)}.
\item[(4)] There are two disjoint hard pairs $(G^1,f^1)$ and $(G^2,f^2)$ with $f^1 \in \mathcal{V}_p(G^1)$ and $f^2 \in \mathcal{V}_p(G^2)$ such that $G$ is obtained from $G^1$ and $G^2$ by merging two vertices $v^1 \in V(G_1)$ and $v^ 2 \in V(G_2)$ to a new vertex $v^*$ (see Figure~\ref{figure_merging}). Furthermore, it holds
$$f(v)=
\begin{cases}
f^1(v) & \text{if } v \in V(H_1) \sm \{v^1\}, \\
f^2(v) & \text{if } v \in V(H_2) \sm \{v^2\}, \\
f^1(v^1) + f^2(v^2) & \text{if } v=v^*
\end{cases}$$
for all $v \in V(G)$. In this case we say that $(G,f)$ is obtained from $(G^1,f^1)$ and $(G^2,f^2)$ by merging $v^1$ and $v^2$ to $v^*$.
\end{itemize}

\begin{figure}[ht]
\centering
\includegraphics[width=.8\textwidth]{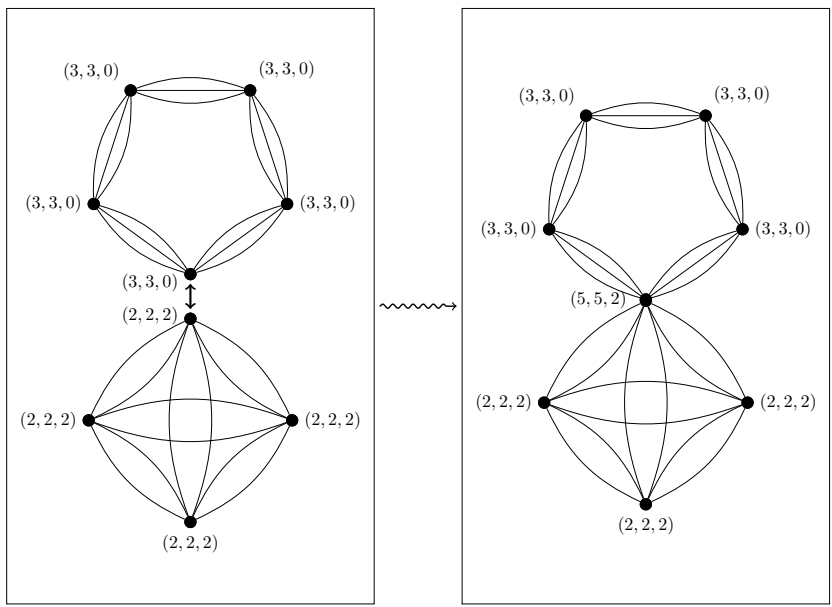}
\caption{Merging two hard pairs.}
\label{figure_merging}
\end{figure}
Note that a hypergraph $G$ is $f$-partitionable if and only if each component of $G$ is $f$-partitionable. Thus, it is sufficient to consider only connected hypergraphs. The next result was proved by Schweser and Stiebitz \cite{SchweserS2017}; for the class of simple graphs it was proved in 2000 by Borodin, Kostochka and Toft \cite{BorodinKT2000}.

\begin{theorem}
\label{Theorem:Hauptsatz:alt}
Let $G$ be a connected hypergraph and let $f\in \cV_p(G)$ be a vector function with $p\geq 1$ such that $f_1(v)+f_2(v)+\cdots +f_p(v)\geq d_G(v)$ for all $v\in V(G)$. Then, $G$ is $f$-partitionable if and only if $(G,f)$ is not a hard pair.
\end{theorem}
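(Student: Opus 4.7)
The plan is to prove both directions separately, with the bulk of the work in the sufficiency direction. For necessity, I would argue by induction on the recursive construction of hard pairs. In a type~(1) pair, the subhypergraph $G_j=G$ forced on any purported partition would witness that $G$ is not strictly $d_G$-degenerate, since every vertex has degree exactly $d_G(v)$, never at most $d_G(v)-1$. In a type~(2) pair, an easy induction on $|V(G_i)|$ shows that any strictly $(tn_i)$-degenerate induced subhypergraph of $tK_n$ contains at most $n_i$ vertices, so an $f$-partition would have to distribute $n$ vertices into parts that accommodate only $n_1+\cdots+n_p=n-1$. In a type~(3) pair, each of the two non-trivial parts $G_k,G_\ell$ would need to be edgeless in the odd cycle (any two adjacent vertices already contribute $t$ parallel edges, forcing degree $\ge t$), and an edgeless bipartition of an odd cycle is impossible. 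In a type~(4) pair, restricting a hypothetical $f$-partition of $G$ to $V(G^1)$ and to $V(G^2)$ produces $f^i$-partitions of the two smaller hard pairs, contradicting the inductive hypothesis.

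For sufficiency I would take a minimum counterexample $(G,f)$ ordered by $|V(G)|+|E(G)|$ and carry out three standard reductions. First, if $\sum_i f_i(v_0)>d_G(v_0)$ at some vertex $v_0$, apply induction to $G-v_0$ with $f$ restricted (the sum condition is preserved and $(G-v_0,f)$ cannot be a hard pair up to the merging decomposition) and extend by placing $v_0$ in any coordinate $i$ with $f_i(v_0)\ge 1$; this forces equality in the sum condition at every vertex. Second, cap each $f_i(v)$ by $d_G(v)$, which does not affect strict $f_i$-degeneracy. Third, if $G$ has a separating vertex $v$, split $G$ along $v$ into pieces $G^1,G^2$ with $v=v^*$ and distribute $f(v^*)=f^1(v^1)+f^2(v^2)$ in every feasible way; by the minimality, either some distribution yields $f^i$-partitions that glue into an $f$-partition of $G$, or every distribution turns both $(G^i,f^i)$ into hard pairs, in which case a careful bookkeeping argument shows $(G,f)$ itself is a type~(4) hard pair.

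After these reductions $(G,f)$ is a block with $\sum_i f_i(v)=d_G(v)$ everywhere and is not of types (1)--(3), and we must produce an $f$-partition to close the argument. This is the main obstacle, and it mirrors the hardest case in the proofs of Brooks's theorem and its variable-degeneracy generalization by Borodin, Kostochka and Toft. The strategy will be Brooks-like: locate a single vertex $v$, or a pair $x,y$, whose deletion keeps $G$ suitably connected and yields an induced pair that is partitionable by the inductive hypothesis, and then extend back to $G$. Cases will be organized by the support pattern of $f$ at a vertex: if some $f_i(v)=0$ the restricted palette at $v$ can be exploited to force the assignment; otherwise one searches for a short induced subhyperpath or a non-complete neighborhood from which to choose the end-vertices of a carefully constructed path. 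The real technical difficulty is that parallel edges and hyperedges of size larger than two interact with strict $f_i$-degeneracy in subtler ways than in the simple graph case, so the reductions must track how the degree of each vertex is distributed among its incident hyperedges; ruling out all remaining structures so as to force $(G,f)$ into type~(2) or (3) is where I expect the proof to earn its length.
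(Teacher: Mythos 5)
First, a point of reference: the paper does not actually prove Theorem~\ref{Theorem:Hauptsatz:alt}. It is imported as a black box from Schweser and Stiebitz \cite{SchweserS2017} (for simple graphs from Borodin, Kostochka and Toft \cite{BorodinKT2000}) and used only to feed Lemma~2.5 in the derivation of Theorem~\ref{Theorem:Hauptsatz:neu}. So your proposal can only be measured against the cited proof, whose architecture you have correctly reproduced: induction over the recursive hard-pair construction for necessity, and a minimal counterexample with the slack-vertex, capping and cut-vertex reductions for sufficiency. Your treatment of types (1)--(3) in the necessity direction is sound. Type (4), however, is wrong as stated: restricting an $f$-partition $(G_1,\ldots,G_p)$ of $G$ to $V(G^1)$ and to $V(G^2)$ need not produce $f^1$- and $f^2$-partitions, because the part $G_j$ containing $v^*$ is strictly $f_j$-degenerate only with the combined budget $f_j(v^*)=f^1_j(v^1)+f^2_j(v^2)$, and its trace on $V(G^1)$ is tested against the smaller value $f^1_j(v^1)$. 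The correct claim is that at least one of the two traces is a valid partition: if both failed, one would find subhypergraphs $K^1\ni v^1$ and $K^2\ni v^2$ with $d_{K^i}(w)\geq f^i_j(w)$ for all their vertices, and gluing them at $v^*$ would produce a subhypergraph of $G_j$ contradicting strict $f_j$-degeneracy. That repairs the step, but your version of it does not.

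The genuine gap is in the sufficiency direction. After your reductions you are left with a block $G$ satisfying $f_1(v)+\cdots+f_p(v)=d_G(v)$ at every vertex, not of types (1)--(3), for which an $f$-partition must be constructed --- and you explicitly defer this (``where I expect the proof to earn its length''). But this residual case \emph{is} the theorem: it contains the full classification of non-partitionable blocks as $tK_n$'s, odd $tC_n$'s and monochromatic blocks, i.e.\ the hypergraph analogue of the Gallai-tree/degree-choosability characterization of \cite{KostochkaSW96} and \cite{ERT79}. Everything before it (your first two reductions are essentially Propositions~\ref{prop:strictly-h} and~\ref{prop:strict=partitionable} of this paper, and the cut-vertex step is the routine part of \cite{BorodinKT2000}) is the easy half. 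Producing a partition of an arbitrary block that is not $tK_n$, not an odd $tC_n$ with the prescribed $f$, and not monochromatic requires a concrete construction --- typically ordering the vertices by decreasing distance from a well-chosen root, or locating two nonadjacent vertices whose removal leaves the block connected, and then greedily extending while controlling how hyperedges of size greater than two and parallel edges contribute to the degrees --- none of which is present in your outline. As it stands, the proposal is a correct road map with the destination missing; it cannot be accepted as a proof of Theorem~\ref{Theorem:Hauptsatz:alt}.
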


On the one hand, Theorem~\ref{Theorem:Hauptsatz:alt} is a strengthening of the result by Borodin, respectively Bollob\'as and Manvel. On the other hand, as explained in \cite{SchweserS2017}, Theorem~\ref{Theorem:Hauptsatz:alt} implies several well known result about colorings and list-colorings of graphs, respectively hypergraphs; in particular, the characterization of degree choosable graphs obtained by Erd\H{o}s, Rubin, and Taylor \cite{ERT79} and the characterization of degree choosable hypergraphs given by Kostochka, Stiebitz, and Wirth \cite{KostochkaSW96}. The special case when $p=\De(G)$ and $f_i(v)=1$ for all $v\in V(G)$ and $1\leq i \leq p$ yields a Brooks-type result for hypergraphs which was obtained by Jones \cite{Jones75}.

In 2007 Matamala \cite{Matamala2007} obtained another strengthening of the result by Borodin, respectively Bollob\'as and Manvel. He proved that if $G$ is a simple graph with maximum degree $\De\geq 3$ not containing a $K_{\De+1}$ as a subgraph and $d_1, d_2$ are positive integers with $d_1+d_2\geq \De$, then there is a partition $(G_1,G_2)$ of $G$ such that $G_1$ is a maximum order induced subgraph with $\col(G_1)\leq d_1$ and $\col(G_2)\leq d_2$. This result improves earlier results obtained by Catlin \cite{Catlin79}, Gerencs\'er \cite{Gerencer65}, and Catlin and Lai \cite{CatlinL95}. Catlin and Gerencs\'{e}r proved that if $G$ is a simple graph with maximum degree $\De\geq 3$ not containing a $K_{\De+1}$, then $G$ has a $\Delta$-coloring in which one color class is a maximum independent set. The main result of this paper is the following generalization of Matamala's theorem.

\begin{theorem}
\label{Theorem:Hauptsatz:neu}
Let $G$ be a hypergraph and let $f\in \cV_p(G)$ be a vector function of $G$ with $p\geq 2$ such that $f_1(v)+f_2(v)+\cdots +f_p(v)\geq d_G(v)$ for all $v\in V(G)$. Furthermore, assume that if $G'$ is a component of $G$, then $(G',f)$ is not a hard pair. Then, there is a partition $(G_1,G_2, \ldots, G_p)$ of $G$ such that $G_1$ is a maximum order strictly $f_1$-degenerate subhypergraph of $G$, and for $i \in \{2,3,\ldots,p-1\}$, the hypergraph $G_i$ is a maximum order strictly $f_i$-degenerate subhypergraph of $G-(V(G_1) \cup V(G_2) \cup \cdots \cup V(G_{i-1}))$.
\end{theorem}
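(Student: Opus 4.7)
The plan is to argue by induction on $p\geq 2$, with the whole reduction carried by the following key claim: if $G_1$ is chosen to be a maximum order strictly $f_1$-degenerate induced subhypergraph of $G$ (which exists because the empty hypergraph is strictly $f_1$-degenerate), then, setting $G':=G-V(G_1)$ and $f':=(f_2,\ldots,f_p)$, the pair $(G',f')$ again satisfies the hypotheses of Theorem~\ref{Theorem:Hauptsatz:alt}. Once the claim is established, for $p=2$ the theorem (with a single coordinate, applied componentwise) immediately yields that $G'$ is strictly $f_2$-degenerate, completing the partition; for $p\geq 3$ the inductive hypothesis applied to $(G',f')$ delivers the successive maximum parts $G_2,\ldots,G_{p-1}$ together with the final part $G_p$. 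The claim itself has two halves: the degree sum inequality $\sum_{i=2}^{p}f_i(v)\geq d_{G'}(v)$ on $V(G')$, and the absence of hard-pair components in $G'$ under $f'$.

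The degree-sum half is direct from maximality. For any $v\in V(G')$, the hypergraph $G_1+v$ fails to be strictly $f_1$-degenerate while $G_1$ does not, so there is a nonempty subhypergraph $H$ of $G_1+v$ containing $v$ with $d_H(u)\geq f_1(u)$ for every $u\in V(H)$. Hence $v$ has at least $f_1(v)$ edges of $G$ whose remaining endpoints all lie in $V(G_1)$, giving $d_{G'}(v)\leq d_G(v)-f_1(v)\leq \sum_{i=2}^{p}f_i(v)$.

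The main obstacle is ruling out a hard-pair component $C$ of $G'$, and I would do this by contradiction. Since every hard pair satisfies $\sum_{i=2}^{p}f_i(v)=d_C(v)$ for each $v\in V(C)$, comparing with the degree-sum bound forces the equalities $d_{G'}(v)=d_C(v)$ and $d_G(v)-d_{G'}(v)=f_1(v)$ for every $v\in V(C)$, so each such $v$ has \emph{exactly} $f_1(v)$ edges of $G$ with all remaining endpoints in $V(G_1)$. The key is then to exploit the recursive brick structure of the hard pair $(C,f'|_{V(C)})$ --- type~(1) blocks with $f_j(v)=d_C(v)$, complete hypergraphs $tK_n$, odd cycles $tC_n$, and their tree-like mergings --- to locate a vertex $v\in V(C)$, or, if no single-vertex addition works, a small set $S\subseteq V(C)$, for which $G[V(G_1)\cup S]$ remains strictly $f_1$-degenerate, contradicting the maximality of $G_1$. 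The vertex-transitivity of $tK_n$ and $tC_n$ together with the existence of non-cut vertices in type~(1) blocks supply the candidate moves; the merging case reduces along the block-cut structure of $C$ to a terminal brick. The truly delicate step is showing that the barrier subhypergraph witnessing $G_1+v$'s failure to be strictly $f_1$-degenerate can always be sidestepped by the chosen move; here the equality $d_G(v)-d_{G'}(v)=f_1(v)$ just derived is essential, as it pins down the edges from $v$ into $V(G_1)$ and tightly constrains where such barriers can occur.
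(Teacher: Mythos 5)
Your degree-sum computation for $G'=G-V(G_1)$ is correct and matches the paper. But the central claim on which your induction rests --- that for \emph{any} maximum order strictly $f_1$-degenerate induced subhypergraph $G_1$, the pair $(G-V(G_1),f')$ has no hard-pair components --- is false. Take $G=P_4$ with vertices $v_1v_2v_3v_4$, $p=2$ and $f\equiv(1,1)$, so that strictly $f_i$-degenerate means edgeless and the hypotheses of the theorem hold ($P_4$ is bipartite, hence $(1,1)$-partitionable, hence not a hard pair by Theorem~\ref{Theorem:Hauptsatz:alt}). The set $\{v_1,v_4\}$ is a maximum independent set, yet its complement induces $K_2$, and $(K_2,(1))$ is a hard pair of type (1). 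Moreover, your proposed route to a contradiction --- finding $v$ or $S\subseteq V(C)$ with $G[V(G_1)\cup S]$ still strictly $f_1$-degenerate --- is unreachable here: every superset of $\{v_1,v_4\}$ contains an edge, so the maximality of $G_1$ is genuinely not violated. The obstruction is not that $G_1$ fails to be maximum; it is that it is the \emph{wrong} maximum (here $\{v_1,v_3\}$ works). No argument that only tries to enlarge $G_1$ can close this gap.

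This is precisely why the paper takes a different route. It invokes Theorem~\ref{Theorem:Hauptsatz:alt} exactly once, to get that $G$ is $f$-partitionable, and then proves (Lemma~\ref{lemma:partitio=better}) that $f$-partitionability together with the degree condition alone already yields an $f$-partition whose first part has maximum order. The proof is an exchange argument: starting from an $f$-partition $(G_1,\dots,G_p)$ and a maximum order strictly $f_1$-degenerate $G_1^*$ with complement $G_2^*$, it swaps a vertex $v^*\in V(G_1)\cap V(K)$ (where $K$ is a bad component of $G_2^*$) into $G_1^*$ while removing a suitable neighbour $u^*$, keeping $G_1^*$ maximum order, and shows termination via the lexicographic potential: maximize $|G_1\cap G_1^*|$, then minimize the number of non-$f'$-partitionable components, then minimize a hyperpath distance. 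Crucially, the iteration step then never needs to re-verify the hard-pair condition for $G'$: the restricted partition $(G_2,\dots,G_p)$ already witnesses that $G'$ is $f'$-partitionable, so Lemma~\ref{lemma:partitio=better} applies again directly. To repair your proof you would have to replace the ``enlarge $G_1$'' step by such a swap-and-progress argument, at which point you are essentially reconstructing the paper's Lemma~\ref{lemma:partitio=better}.
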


\section{Proof of Theorem~\ref{Theorem:Hauptsatz:neu}}
\begin{proposition} \label{prop:strictly-h}
Let $G$ be a hypergraph, and let $f\in \cV_p(G)$ be a vector function of $G$ with $p\geq 1$, and let
$h :V(G) \to \nato$ be the function with $h(v)=f_1(v)+f_2(v)+\cdots + f_p(v)$ for all $v\in V(G)$. If $G$ is strictly $h$-degenerate, then $G$ is $f$-partitionable.
\end{proposition}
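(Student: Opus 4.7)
The plan is to proceed by induction on $|V(G)|$. If $V(G)=\varnothing$, the empty partition works vacuously, so assume $G$ is non-empty. Since $G$ is strictly $h$-degenerate, $G$ itself (as a non-empty subhypergraph of itself) contains a vertex $v$ with $d_G(v)\leq h(v)-1$. The hypergraph $G-v$ remains strictly $h$-degenerate, because every non-empty subhypergraph of $G-v$ is a non-empty subhypergraph of $G$. Applying the induction hypothesis to $G-v$ (with the restriction of $f$ to its vertex set) yields an $f$-partition $(G_1', G_2', \ldots, G_p')$ of $G-v$.

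Next I would insert $v$ into a well-chosen part. For each index $i$, let
$$d_i^{*}=\bigl|\{e\in E_G(v):e\sm\{v\}\subseteq V(G_i')\}\bigr|,$$
so that $d_i^{*}$ equals the degree of $v$ in the induced subhypergraph $G[V(G_i')\cup\{v\}]$. Since the sets $V(G_i')$ are pairwise disjoint, every edge of $G$ incident to $v$ contributes to at most one $d_i^{*}$, giving
$$\sum_{i=1}^{p}d_i^{*}\leq d_G(v)\leq h(v)-1=\sum_{i=1}^{p}f_i(v)-1.$$
The pigeonhole principle then produces an index $j$ with $d_j^{*}\leq f_j(v)-1$. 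I define $G_j=G[V(G_j')\cup\{v\}]$ and $G_i=G_i'$ for $i\neq j$; clearly $(G_1,G_2,\ldots,G_p)$ is a partition of $G$.

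The remaining task is to verify that $G_j$ is strictly $f_j$-degenerate, as the other parts are unchanged. Let $H$ be any non-empty subhypergraph of $G_j$. If $v\notin V(H)$, then $H$ is a subhypergraph of the strictly $f_j$-degenerate $G_j'$, which immediately supplies a vertex $u\in V(H)$ with $d_H(u)\leq f_j(u)-1$. If $v\in V(H)$, then $d_H(v)\leq d_{G_j}(v)=d_j^{*}\leq f_j(v)-1$, and one may take $u=v$. Hence $(G_1,G_2,\ldots,G_p)$ is an $f$-partition of $G$, completing the induction. There is no serious obstacle in the argument; the key observation is simply that the small-degree vertex $v$ guaranteed by strict $h$-degeneracy always leaves enough degree budget distributed across the coordinates of $f(v)$ to be absorbed into one of the already-constructed parts without violating strict degeneracy.
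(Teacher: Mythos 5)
Your proof is correct and follows essentially the same route as the paper: induction on the order of $G$, deletion of a vertex $v$ with $d_G(v)<h(v)$, and a pigeonhole argument to absorb $v$ into one of the parts of the inductive partition. You merely spell out more explicitly the accounting of edges across parts and the final verification of strict $f_j$-degeneracy, which the paper leaves implicit.
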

\begin{proof}
The proof is by induction on the order $n=|G|$ of $G$. If $n=1$, then $V(G)=\{v\}$ consists of only one vertex and, as $G$ is strictly $h$-degenerate, $0=d_G(v)< h(v)= f_1(v) + f_2(v) + \ldots + f_p(v)$, which implies that there is an index $i \in \{1,2,\ldots,p\}$ such that $f_i(v) > 0$. Setting $G_i=G[\{v\}]$ and $G_j= \varnothing$ for $j \neq i$ from $\{1,2,\ldots,p\}$ then gives us the $f$-partition $(G_1,G_2,\ldots,G_p)$ as claimed. Now assume $n \geq 2$. Since $G$ is strictly $h$-degenerate, there is a vertex $v \in V(G)$ with $d_G(v) < h(v)$. Clearly, $G-v$ is strictly $h$-degenerate, and so $G-v$ admits an $f$-partition $(G_1,G_2,\ldots,G_p)$ (by induction hypothesis). As $d_G(v) < h(v) = f_1(v) + f_2(v) + \ldots + f_p(v)$, it follows from the pigeonhole principle that $d_{G_i}(v) < f_i(v)$ for some $i \in \{1,2,\ldots,p\}$, say for $i=1$. Then, $G_1+v$ is strictly $f_1$-degenerate and so $(G_1+v,G_2,\ldots,G_p)$ is an $f$-partition of $G$, as claimed. This completes the proof.
\end{proof}

\begin{proposition} \label{prop:strict=partitionable}
Let $G$ be a connected hypergraph, and let $f\in \cV_p(G)$ be a vector function of $G$ with $p\geq 1$ such that $f_1(v) + f_2(v) + \ldots f_p(v) \geq d_G(v)$ for all $v \in V(G)$.  If $G$ is not $f$-partitionable, then $f_1(v) + f_2(v) + \ldots + f_p(v) = d_G(v)$ for all $v \in V(G)$.
\end{proposition}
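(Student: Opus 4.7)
The plan is to use Theorem~\ref{Theorem:Hauptsatz:alt} as a black box. Suppose that $G$ is connected, that $h(v) := f_1(v) + f_2(v) + \ldots + f_p(v) \geq d_G(v)$ for all $v \in V(G)$, and that $G$ is not $f$-partitionable. Then Theorem~\ref{Theorem:Hauptsatz:alt} forces $(G,f)$ to be a hard pair. The entire content of the proposition therefore reduces to the following structural fact about hard pairs: if $(G,f)$ is a hard pair, then $h(v) = d_G(v)$ for every $v \in V(G)$. Once this is established, the hypothesis $h(v) \geq d_G(v)$ and the equality $h(v) = d_G(v)$ yields the conclusion.

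I would prove this structural fact by induction on the number of merge operations (type (4) in the definition) used to build $(G,f)$ from hard pairs of types (1), (2), and (3).

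For the base case, suppose $(G,f)$ is a hard pair that is not obtained via a merge, so it is of type (1), (2), or (3). In type (1), we have $f_j(v) = d_G(v)$ and $f_i(v) = 0$ for $i \neq j$, hence $h(v) = d_G(v)$ by inspection. In type (2), $G = tK_n$, so $d_G(v) = t(n-1)$ for every $v$, while $h(v) = t(n_1 + n_2 + \ldots + n_p) = t(n-1)$; the two agree. In type (3), $G = tC_n$, so $d_G(v) = 2t$ for every $v$, while $h(v) = t + t = 2t$; again they agree.

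For the induction step, assume $(G,f)$ is obtained from two hard pairs $(G^1,f^1)$ and $(G^2,f^2)$ by merging $v^1 \in V(G^1)$ and $v^2 \in V(G^2)$ into a new vertex $v^*$. Write $h^j := f_1^j + f_2^j + \ldots + f_p^j$ for $j \in \{1,2\}$. For any $v \in V(G^j) \setminus \{v^j\}$ we have $f(v) = f^j(v)$ and $d_G(v) = d_{G^j}(v)$, so the induction hypothesis applied to $(G^j,f^j)$ gives $h(v) = h^j(v) = d_{G^j}(v) = d_G(v)$. At the merged vertex, the edges incident to $v^*$ in $G$ are precisely the disjoint union of the edges incident to $v^1$ in $G^1$ and the edges incident to $v^2$ in $G^2$, hence $d_G(v^*) = d_{G^1}(v^1) + d_{G^2}(v^2)$; combining with the inductive equalities $h^1(v^1) = d_{G^1}(v^1)$ and $h^2(v^2) = d_{G^2}(v^2)$ and with the defining formula $f(v^*) = f^1(v^1) + f^2(v^2)$ yields $h(v^*) = h^1(v^1) + h^2(v^2) = d_G(v^*)$, completing the induction.

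The proof is essentially bookkeeping; there is no real obstacle beyond carefully walking through the four cases in the definition of a hard pair and tracking how degrees and the vector function behave under merging. The key conceptual step is recognising that Theorem~\ref{Theorem:Hauptsatz:alt} reduces the statement to a property purely internal to the recursive definition of hard pair.
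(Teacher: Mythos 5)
Your proof is correct, but it takes a genuinely different route from the paper's. The paper proves the proposition directly in four lines: if some vertex $u$ had $d_G(u)<h(u)$, then connectivity of $G$ forces every non-empty subhypergraph to contain a vertex $v$ with $d_{H}(v)<h(v)$ (take a vertex of the subhypergraph at minimum distance from $u$), so $G$ is strictly $h$-degenerate, and Proposition~\ref{prop:strictly-h} then yields an $f$-partition, a contradiction. You instead invoke the full characterization of Theorem~\ref{Theorem:Hauptsatz:alt} to conclude that $(G,f)$ is a hard pair, and then verify by induction on the number of merge operations that every hard pair satisfies $h(v)=d_G(v)$ everywhere; your case analysis of types (1)--(4) and the degree bookkeeping at the merged vertex are all correct (reading the $d_H(v)$ in condition (1) as $d_G(v)$, which is clearly the intent). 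The trade-off: your argument is not circular within this paper, since Theorem~\ref{Theorem:Hauptsatz:alt} is imported from \cite{SchweserS2017}, and your structural fact about hard pairs is a reusable observation; but it is a sledgehammer for a statement the paper settles with an elementary, self-contained argument, and the proof of Theorem~\ref{Theorem:Hauptsatz:alt} in the cited reference essentially contains this regularity statement as an early ingredient, so the natural logical order runs in the opposite direction from the one you chose.
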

\begin{proof} 
Let $h:V(G) \to \mathbb{N}_0$ with $h(v)=f_1(v) + f_2(v) + \ldots + f_p(v)$ for all $v \in V(G)$. Then, $d_G(v) \leq h(v)$ for all $v \in V(G)$. Assume that there is a vertex $u \in V(G)$ with $d_G(u) < f_1(v) + f_2(v) + \ldots + f_p(v)=h(u)$. As $G$ is connected, it then follows that $G$ is strictly $h$-degenerate. Proposition~\ref{prop:strictly-h} then implies that $G$ admits an $f$-partition, a contradiction.
\end{proof}

\begin{lemma}
Let $G$ be a hypergraph and let $f\in \cV_p(G)$ be a vector function of $G$ with $p\geq 2$ such that $f_1(v)+f_2(v)+\cdots +f_p(v)\geq d_G(v)$ for all $v\in V(G)$. If $G$ is $f$-partitionable, then there is an $f$-partition $(G_1,G_2, \ldots, G_p)$ of $G$ such that $G_1$ is a maximum order strictly $f_1$-degenerate subhypergraph of $G$.
\label{lemma:partitio=better}
\end{lemma}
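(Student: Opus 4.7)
The plan is to choose a maximum order strictly $f_1$-degenerate induced subhypergraph $H^*$ of $G$ and show that $G^\star:=G-V(H^*)$ is $(f_2,\ldots,f_p)$-partitionable; combined with the choice $G_1:=H^*$, any such partition of $G^\star$ yields the desired $f$-partition of $G$.

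First, I would verify the degree condition needed in order to apply Theorem~\ref{Theorem:Hauptsatz:alt} to $G^\star$ with $f^\star:=(f_2,\ldots,f_p)$. For each $v\in V(G^\star)$ let $a(v)$ denote the number of edges of $G$ incident with $v$ whose vertex set meets $V(H^*)$, so that $d_{G^\star}(v)=d_G(v)-a(v)$. The maximality of $H^*$ forces $H^*+v$ to fail to be strictly $f_1$-degenerate, so some non-empty induced subhypergraph $S\subseteq V(H^*)\cup\{v\}$ of $G$ satisfies $d_{G[S]}(u)\geq f_1(u)$ for every $u\in V(S)$; since $H^*$ itself is strictly $f_1$-degenerate we must have $v\in V(S)$ and $d_{G[S]}(v)\geq f_1(v)$, and every edge of $G[S]$ incident with $v$ has its other endpoints in $V(H^*)$. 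Therefore $a(v)\geq f_1(v)$, and $d_{G^\star}(v)\leq\sum_{i=1}^p f_i(v)-f_1(v)=\sum_{i=2}^p f_i(v)$.

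Second, by Theorem~\ref{Theorem:Hauptsatz:alt} applied component-by-component to $G^\star$ with $f^\star$, it suffices to show that no component $C$ of $G^\star$ together with $f^\star|_C$ forms a hard pair. Suppose some $C$ does. Proposition~\ref{prop:strict=partitionable} applied to $C$ upgrades the degree inequality to equality, so $\sum_{i=2}^p f_i(v)=d_C(v)$ for every $v\in V(C)$; combined with $\sum_{i=1}^p f_i(v)\geq d_G(v)$ this forces $a(v)=f_1(v)$ and $\sum_{i=1}^p f_i(v)=d_G(v)$ on $V(C)$, and in particular each of the $f_1(v)$ edges of $G$ from $v\in V(C)$ meeting $V(H^*)$ has its entire vertex set in $V(H^*)\cup\{v\}$. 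Fix now any $f$-partition $(G_1,G_2,\ldots,G_p)$ of $G$ (which exists by hypothesis) and set $A_i:=V(C)\cap V(G_i)$; if $A_1=\ems$, then $(A_2,\ldots,A_p)$ would be an $f^\star|_C$-partition of $C$, contradicting the hard-pair hypothesis. Hence $A_1\neq\ems$.

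The main obstacle, and the crux of the argument, is then to convert the existence of a non-empty $A_1\subseteq V(C)\cap V(G_1)$ into a contradiction with the maximality of $H^*$. The plan is to exhibit a strictly $f_1$-degenerate induced subhypergraph of $G$ of order strictly greater than $|V(H^*)|$, built out of $V(H^*)$ together with a carefully chosen subset of $V(C)$ containing $A_1$, by a case analysis on the four types of hard pair (tight block, $tK_n$, odd $tC_n$, and merging). In each of the base cases (1)--(3) the rigidity $a(v)=f_1(v)$ on $V(C)$, the strict $f_1$-degeneracy of $G[A_1]$, and the block/complete/cycle structure of $C$ determine which vertices of $V(C)$ can safely be attached to $V(H^*)$ while preserving strict $f_1$-degeneracy; the merging case (4) then reduces to the base cases by induction on the number of merges used to build $(C,f^\star|_C)$.
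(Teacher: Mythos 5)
Your first step (every vertex $v\notin V(H^*)$ satisfies $d_{G-V(H^*)}(v)\le f_2(v)+\cdots+f_p(v)$ because the obstruction to adding $v$ to $H^*$ forces at least $f_1(v)$ edges at $v$ into $V(H^*)\cup\{v\}$) is correct and is exactly part (a) of the Claim in the paper's proof. But the overall strategy has a fatal flaw: it is simply not true that for an \emph{arbitrary} maximum order strictly $f_1$-degenerate subhypergraph $H^*$ the remainder $G-V(H^*)$ is $(f_2,\ldots,f_p)$-partitionable, so the contradiction you are aiming for in your final paragraph does not exist. Concretely, let $G$ be the simple graph obtained from a triangle $abc$ by attaching a pendant vertex to each of $a,b,c$, with $p=2$, $f_1\equiv 1$, $f_2\equiv 2$; then $f_1(v)+f_2(v)=3\ge d_G(v)$ everywhere, $G$ is $f$-partitionable (it is strictly $h$-degenerate for $h=f_1+f_2$), and the three pendant vertices form a maximum order strictly $f_1$-degenerate subhypergraph $H^*$ (a maximum independent set). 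Yet $G-V(H^*)$ is the triangle, which is $2$-regular and hence a hard pair for $f_2\equiv 2$ (type (1)). Your argument would at this point try to enlarge $H^*$, but no strictly $f_1$-degenerate subhypergraph of order $4$ exists; the lemma is nevertheless true here because a \emph{different} maximum independent set (one triangle vertex plus two pendants) leaves a forest behind. So the hard component does not contradict the maximality of $H^*$; it only shows that this particular $H^*$ was the wrong one.

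This is precisely why the paper's proof is an exchange argument rather than a direct one: it keeps both a fixed $f$-partition $(G_1,\ldots,G_p)$ and a maximum $G_1^*$ in play, and repeatedly swaps a vertex $v^*\in V(G_1)$ lying in a bad component of $G-V(G_1^*)$ with a neighbour $u^*$ in the obstruction hypergraph $H\subseteq G_1^*+v^*$, guided by a three-level extremal choice (maximize $|G_1\cap G_1^*|$, then minimize the number of non-partitionable components, then minimize a distance inside $H$). Your proposal contains no mechanism for changing $H^*$, and the concluding case analysis over the four hard-pair types is only announced, not carried out; even if executed, in the base cases it would run into the counterexample above. To repair the proof you would need to add a secondary optimization over the choice of $H^*$ (or over its interaction with a fixed $f$-partition) and show that a bad component allows you to improve that quantity — which is essentially the paper's argument.
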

\begin{proof}
The proof is by reductio ad absurdum. Let $\cF$ denote the set of tuples $(G_1,G_2, \ldots, G_p,G_1^*,G_2^*)$  such that $(G_1, G_2, \ldots, G_p)$ is an $f$-partition of $G$, $G_1^*$ is a maximum order strictly $f_1$-degenerate subhypergraph of $G$, and $G_2^*=G\sm V(G_1^*)$. Furthermore, let $f'=(f_2, f_3, \ldots, f_p)$ and let $h=f_2+f_3+ \cdots + f_p$. By assumption, $G$ has an $f$-partition. Clearly, $G$ has a maximum order strictly $f_1$-degenerate subhypergraph. Hence, $\cF$ is non-empty. 

\begin{claim} \label{claim_tuple}
Let $(G_1,G_2, \ldots, G_p,G_1^*,G_2^*)\in \cF$ be an arbitrary tuple. Then, the following statements hold:
\begin{itemize}

\item[{\rm (a)}] Let $v \in V(G_2^*)$ be an arbitrary vertex. Then, there is a hypergraph $H \subseteq G_1^* + v$ with $d_H(w) \geq f_1(w)$ for all $w \in V(H)$ and each such hypergraph contains the vertex $v$. As as a consequence, $d_{G_2^*}(v) \leq f_2(v) + f_3(v) + \ldots + f_p(v)=h(v)$ for all $v \in V(G_2^*)$.
\item[{\rm (b)}] The hypergraph $G_2^*$ is not $f'$-partitionable and any non-$f'$-partitionable component $K$ of $G_2^*$ is $h$-regular and contains a vertex $v^*$ from $G_1$.
\item[{\rm (c)}]  Let $K$ be a non $f'$-partitionable component of $G_2^*$ and let $v^* \in V(K) \cap V(G_1)$. Moreover, let  $H \subseteq G_1^* + v^*$ be a hypergraph with $d_H(w) \geq f_1(w)$ for all $w \in V(H)$. Then, $H$ contains a vertex $w^*$ from $V(G) \setminus V(G_1)$.
\item[{\rm (d)}] Let $K$ be a non $f'$-partitionable component of $G_2^*$ and let $v^* \in V(K) \cap V(G_1)$. Moreover, let  $H \subseteq G_1^* + v^*$ be a hypergraph with $d_H(w) \geq f_1(w)$ for all $w \in V(H)$ and let $u^*$ be a vertex that is adjacent to $v^*$ in $H$. Then, $\tilde{G_1}=G_1^*+v^*-u^*$ is a maximum order strictly $f_1$-degenerate subhypergraph of $G$ and with $\tilde{G_2}=G_2^*+u^*-v^*$ we have $(G_1,G_2,\ldots,G_p,\tilde{G_1},\tilde{G_2})\in \mathcal{F}$. Furthermore, $\tilde{G_2}$ has at most as many non $f'$-partitionable components as $G_2^*$ and if equality holds, then $u^*$ is contained in a non-$f'$-partitionable component of $\tilde{G_2}$.
\end{itemize}
\end{claim}

\pfcl{For the proof of (a) let $v \in V(G_2^*)$ be an arbitrary vertex.  Since $G_1^*$ is a maximum order strictly $f_1$-degenerate subhypergraph, $G_1^* + v$ is not strictly $f_1$-degenerate and, thus, there is a subhypergraph $H$ of $G_1^* + v$ such that $d_H(w) \geq f_1(w)$ for all $w \in V(H)$. As $G_1$ is strictly $f_1$-degenerate, $H$ contains the vertex $v$ and so $d_{G_1^*}(v) \geq d_H(v) \geq f_1(v)$. As $d_{G_1^*}(v) + d_{G_2^*}(v) \leq d_G(v) \leq f_1(v) + f_2(v) + \ldots + f_p(v)$, this implies that $d_{G_2^*}(v) \leq f_2(v) + f_3(v) + \ldots + f_p(v)$, which proves statement (a). 

 For the proof of (b) assume that $G_2^*$ admits an $f'$-partition $(G_2',G_3',\ldots G_p')$. Then, the tuple $(G_1^*,G_2',G_3',\ldots,G_p')$ is an $f$-partition of $G$ such that $G_1^*$ is a maximum order strictly $f_1$-degenerate subhypergraph of $G$, contradicting the assumption that the lemma is wrong. Hence, $G_2^*$ is not $f'$-partitionable, i.e., $G_2^*$ has at least one non $f'$-partitionable component. Now let $K$ be a component of $G_2^*$ that is not $f'$-partitionable. Then, by (a) and by Proposition~\ref{prop:strict=partitionable}, $d_K(v)=d_{G_2^*}(v)=f_2(v) + f_3(v) + \ldots + f_p(v)$ for all $v \in V(K)$, i.e. $K$ is $h$-regular. As $G - V(G_1)$ is $f'$-partitionable, $K$ clearly contains a vertex $v^*$ from $G_1$. This proves (b). 

For the proof of (c) and (d), let $H \subseteq G_1^* + v^*$ be a hypergraph with $d_H(w) \geq f_1(w)$ for all $w \in V(H)$ (which exists by (a)). By (a), $H$ contains the vertex $v^*$. As $G_1$ is strictly $f_1$-degenerate, $H$ contains a vertex $w^*$ from $V(G) \setminus V(G_1)$, which proves (c). Now let $u^*$ be a vertex that is adjacent to $v^*$ in $H$. Then, $d_{G_2^*}(v^*)=d_K(v^*)=f_2(v^*) + f_3(v^*) + \ldots + f_p(v^*)$ (by (b)), $d_{G_1^*}(v^*) \geq d_H(v^*) \geq f_1(v^*)$, and $d_{G_1^*}(v^*) + d_{G_2^*}(v^*) \leq d_{G}(v^*) \leq f_1(v^*) + f_2(v^*) + \ldots + f_p(v^*)$. As a consequence, we have $d_{G_1^*}(v^*)=f_1(v^*)$ and so $d_{G_1^*}(v^*)=d_H(v^*)$. Hence, $d_{G_1^* - u^*}(v^*) < f_1(v^*)$. As $G_1^* - u^* \subseteq G_1^*$ and $G_1^*$ is strictly $f_1$-degenerate, this implies that $G_1^* + v^* - u^*$ is strictly $f_1$-degenerate as well and so $\tilde{G_1}=G_1^* + v^* - u^*$ is a maximum order strictly $f_1$-degenerate subhypergraph of $G$. Note that $K-v^*$ is $f'$-partitionable (as $K$ is $h$-regular by (b) and by Proposition~\ref{prop:strict=partitionable}) and so $G_2^* - v^*$ has one non $f'$-partitionable component less than $G_2^*$. Clearly, $\tilde{G_2}=G_2^* - v^* + u^*$ may have only one more non $f'$-partitionable component than $G_2^* - v^*$ and if so, $u^*$ must be contained in this component. Since $\tilde{G_1}$ is a maximum order strictly $f_1$-degenerate subhypyergraph of $G$, $(G_1,G_2,\ldots,G_p,\tilde{G_1},\tilde{G_2}) \in \mathcal{F}$ and  the proof is complete.}

Let $(G_1,G_2, \ldots, G_p,G_1^*,G_2^*)\in \cF$ be an arbitrary tuple. Since we assume that the lemma is false, $|G_1|<|G_1^*|$. By Claim~\ref{claim_tuple}(b), $G_2^*$ is not $f'$-partitionable and so there is a non $f'$-partitionable component of $G_2^*$. Let $\mathcal{K}_{(G_1,G_2,\ldots,G_p,G_1^*,G_2^*)}$ denote the set of non $f'$-partionable components of $G_2^*$. Then, by Claim~\ref{claim_tuple}(c), for any $K \in \mathcal{K}_{(G_1,G_2,\ldots,G_p,G_1^*,G_2^*)}$ we have $V(K) \cap V(G_1) \neq \varnothing$. Let $$V_{(G_1,G_2,\ldots,G_p,G_1^*,G_2^*)}=\bigcup_{K \in \mathcal{K}_{(G_1,G_2,\ldots,G_p,G_1^*,G_2^*)}}(V(K) \cap V(G_1)).$$ Moreover, let $\mathcal{T}_{(G_1,G_2,\ldots,G_p,G_1^*,G_2^*)}$ denote the set of all tupels $(v^*,H,w^*)$ such that $v^* \in V_{(G_1,G_2,\ldots,G_p,G_1^*,G_2^*)}$, $H$ is a subhypergraph of $G_1^* + v^*$ with $d_H(w) \geq f_1(w)$ for all $w \in V(H)$ and $w^* \in V(H) \setminus V(G_1)$. By Claim~\ref{claim_tuple}(a),(c), each vertex $v^* \in V_{(G_1,G_2,\ldots,G_p,G_1^*,G_2^*)}$ is contained in some tuple from $\mathcal{T}_{(G_1,G_2,\ldots,G_p,G_1^*,G_2^*)}$.

Now we choose  $(G_1,G_2, \ldots, G_p,G_1^*,G_2^*) \in \mathcal{F}$ such that

\begin{itemize}
\item[(1)] $|G_1 \cap G_1^*|$ is maximum.

\item[(2)] $|\mathcal{K}_{(G_1,G_2,\ldots,G_p,G_1^*,G_2^*)}|$ is minimum subject to (1).

\item[(3)] $m=\min \{\text{dist}_H(v^*,w^*) ~|~ (v^*,H,w^*) \in \mathcal{T}_{(G_1,G_2,\ldots,G_p,G_1^*,G_2^*)}\}$ is minimum subject to (1),(2).
\end{itemize}

Let $(v^*,H,w^*) \in \mathcal{T}_{(G_1,G_2,\ldots,G_p,G_1^*,G_2^*)}$ such that $d_H(v^*,w^*)=m$. If $m=1$, then $w^*$ is in $H$ adjacent to $v^*$ and it follows from Claim~\ref{claim_tuple}(d) that $\tilde{G_1}=G_1^* + v^* - w^*$ is a maximum order strictly $f_1$-degenerate subgraph of $G$. Moreover, $|V(G_1) \cap V(\tilde{G_1})| > |V(G_1) \cap V(G_1^*)|$, contradicting (1). Hence, $m \geq 2$. Let $u^*$ be a vertex that is adjacent to $v^*$ in $H$ and is contained in a shortest $(v^*,w^*)$-hyperpath of $H$. As $m \geq 2$ and by (3), $u^* \in V(G_1)$. By Claim~\ref{claim_tuple}(d), $\tilde{G_1}=G_1 + v^* - u^*$ is a maximum order strictly $f_1$-degenerate subhypergraph of $G$ and $\tilde{G_2}=G_2 + u^* - v^*$ has at most $|\mathcal{K}_{(G_1,G_2,\ldots,G_p,G_1^*,G_2^*)}|$ non $f'$-partitionable components. By (2), $\tilde{G_2}$ has exactly $|\mathcal{K}_{(G_1,G_2,\ldots,G_p,G_1^*,G_2^*)}|$ non $f'$-partitionable components implying (by Claim~\ref{claim_tuple}(d)) that $u^*$ is contained in a non $f'$-partitionable component $K$ of $\tilde{G_2}$.  Then, $(G_1,G_2,\ldots,G_p,\tilde{G_1},\tilde{G_2}) \in \mathcal{F}$ is a tuple satisfying (1) and (2) and $(u^*,H,w^*) \in \mathcal{T}_{(G_1,G_2,\ldots,G_p,\tilde{G_1},\tilde{G_2})}$ with $d_H(u^*,w^*) < d_H(v^*,w^*)=m$, contradicting (3). This proves the lemma.
\end{proof}

\begin{lemma}
Let $G$ be a hypergraph and let $f\in \cV_p(G)$ be a vector function of $G$ with $p\geq 2$ such that $f_1(v)+f_2(v)+\cdots +f_p(v)\geq d_G(v)$ for all $v\in V(G)$. If $G$ is $f$-partitionable, then there is a partition $(G_1,G_2, \ldots, G_p)$ of $G$ such that $G_1$ is a maximum order strictly $f_1$-degenerate subhypergraph of $G$, and for $i \in \{2,3,\ldots,p-1\}$, the hypergraph $G_i$ is a maximum order strictly $f_i$-degenerate subhypergraph of $G-(V(G_1) \cup V(G_2) \cup \cdots \cup V(G_{i-1}))$.
\end{lemma}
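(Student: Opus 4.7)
The plan is induction on $p\geq 2$. The base case $p=2$ is precisely Lemma~\ref{lemma:partitio=better}, because the chain condition on $G_i$ for $i\in\{2,\ldots,p-1\}=\varnothing$ is then vacuous. So suppose $p\geq 3$ and take the statement of the lemma for $p-1$ coordinates as the inductive hypothesis.

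I would first invoke Lemma~\ref{lemma:partitio=better} on $(G,f)$ to obtain an $f$-partition $(G_1,G_2',\ldots,G_p')$ in which $G_1$ is a maximum order strictly $f_1$-degenerate subhypergraph of $G$. Setting $G'=G-V(G_1)$ and $f'=(f_2,\ldots,f_p)\in\cV_{p-1}(G')$, the tuple $(G_2',\ldots,G_p')$ is an $f'$-partition of $G'$, so $G'$ is $f'$-partitionable. Once the degree condition $f_2(v)+\cdots+f_p(v)\geq d_{G'}(v)$ is established for every $v\in V(G')$, the inductive hypothesis applies to $(G',f')$ (it has $p-1\geq 2$ coordinates) and produces a partition $(G_2,G_3,\ldots,G_p)$ of $G'$ such that $G_2$ is a maximum order strictly $f_2$-degenerate subhypergraph of $G'=G-V(G_1)$ and, for every $i\in\{3,\ldots,p-1\}$, $G_i$ is a maximum order strictly $f_i$-degenerate subhypergraph of $G'-(V(G_2)\cup\cdots\cup V(G_{i-1}))=G-(V(G_1)\cup V(G_2)\cup\cdots\cup V(G_{i-1}))$. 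Prepending $G_1$ then gives the partition demanded by the lemma.

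The hard part---indeed the only real content---is verifying that degree inequality, and this is where the maximality of $G_1$ is essential, with the argument mirroring Claim~\ref{claim_tuple}(a). For a fixed $v\in V(G')$, the fact that $G_1+v$ is not strictly $f_1$-degenerate provides a subhypergraph $H\subseteq G_1+v$ with $d_H(w)\geq f_1(w)$ for every $w\in V(H)$; strict $f_1$-degeneracy of $G_1$ then forces $v\in V(H)$, so $f_1(v)\leq d_H(v)$. Each edge of $H$ incident with $v$ is an edge of $G$ whose vertex set lies in $V(G_1)\cup\{v\}$, whereas each edge of $G'$ incident with $v$ is an edge of $G$ whose vertex set lies in $V(G)\sm V(G_1)$; since there are no loops, these two families are disjoint subsets of $E_G(v)$, and hence
$$f_1(v)+d_{G'}(v)\leq d_H(v)+d_{G'}(v)\leq d_G(v)\leq f_1(v)+f_2(v)+\cdots+f_p(v),$$
which rearranges to $d_{G'}(v)\leq f_2(v)+\cdots+f_p(v)$, as required.

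Beyond this step I do not foresee further obstacles: once the induction fires on $(G',f')$, the chain $G-(V(G_1)\cup\cdots\cup V(G_{i-1}))$ appearing in the conclusion coincides with the chain $G'-(V(G_2)\cup\cdots\cup V(G_{i-1}))$ produced by the inductive hypothesis, so the maximality guarantees transfer verbatim from $G'$ to $G$.
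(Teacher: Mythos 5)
Your proposal is correct and follows essentially the same route as the paper: apply Lemma~\ref{lemma:partitio=better} to extract a maximum order $G_1$, verify via the maximality of $G_1$ that the degree hypothesis passes to $G'=G-V(G_1)$ with $f'=(f_2,\ldots,f_p)$, and recurse (the paper phrases the recursion as ``repeated application'' where you use formal induction on $p$, and it proves the degree inequality by contradiction where you argue directly via the certificate $H$, but these are the same argument).
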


\begin{proof}
It follows from Lemma~\ref{lemma:partitio=better} that $G$ has an $f$-partition $(G_1,G_2,\ldots,G_p)$ such that $G_1$ is a maximum order strictly $f_1$-degenerate subhypergraph. Let $G'=G-V(G_1)$. We claim that $f_2(v) + f_3(v) + \ldots + f_p(v) \geq d_{G'}(v)$ for all $v \in V(G')$. Otherwise, $ f_2(v) + f_3(v) + \ldots + f_p(v) < d_{G'}(v)$ for some $v \in V(G')$ and, as $f_1(v) + f_2(v) + \ldots + f_p(v) \geq d_G(v)$, we conclude $d_{G_1}(v) < f_1(v)$. As a consequence, $G_1 + v$ is a strictly $f_1$-degenerate subhypergraph of $G$ with $|G_1 + v| > |G_1|$, contradicting the maximality of $G_1$. Hence, $f_2(v) + f_3(v) + \ldots + f_p(v) \geq d_{G'}(v)$ for all $v \in V(G')$. Let $f'=(f_2,f_3,\ldots,f_p)$. Since $(G_2,G_3,\ldots,G_p)$ is an $f'$-partition of $G'$, we can again apply Lemma~\ref{lemma:partitio=better} and obtain an $f'$-partition $(G_2',G_3',\ldots,G_p')$ of $G'$ such that $G_2'$ is a maximum order strictly $f_2$-degenerate subhypergraph. By repeated application of the above arguments we finally obtain an $f$-partition as required. 
\end{proof}

Clearly, Theorem~\ref{Theorem:Hauptsatz:neu} is a direct consequence of Theorem~\ref{Theorem:Hauptsatz:alt} and the above lemma and so the proof is complete. The next corollary can be deduced easily from Theorem~\ref{Theorem:Hauptsatz:alt} and Theorem~\ref{Theorem:Hauptsatz:neu}.

\begin{corollary}
Let $G$ be a connected hypergraph having maximum degree $\Delta \geq 1$. Moreover, let $d_1, d_2, \ldots, d_p$ be positive integers, $p \geq 2$, such that $d_1 + d_2 + \ldots + d_p \geq \Delta.$ Then, there is a partition $(G_1,G_2,\ldots,G_p)$ of $G$ such that $G_1$ is a maximum order subhypergraph of $G$ with $\col(G_1) \leq d_1$, and for $i \in \{2,3,\ldots,p-1\}$, the hypergraph $G_i$ is a maximum order subhypergraph of $G - ((V(G_1)) \cup V(G_2) \cup \cdots \cup V(G_{i-1}))$ with $\col(G_i) \leq d_i$, unless  $G$ is a $tK_n$ for some $t,n \geq 1$, $d_i=tn_i$ for some $n_i \geq 1$,  $i \in \{1,2,\ldots,p\}$, and $d_1 + d_2 + \ldots + d_p=t(n-1)=\Delta$, or $G=tC_n$ for $t \geq 1$ and $n \geq 3$ odd, $p=2$, and $d_i=t$ for $i \in \{1,2\}$.
\end{corollary}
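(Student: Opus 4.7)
The plan is to apply Theorem~\ref{Theorem:Hauptsatz:neu} to the constant vector function $f\in\cV_p(G)$ defined by $f_i(v)=d_i$ for every $v\in V(G)$ and every $i\in\{1,\ldots,p\}$. By hypothesis, $\sum_{i=1}^p f_i(v)=\sum d_i\geq \Delta\geq d_G(v)$, so the degree condition of Theorem~\ref{Theorem:Hauptsatz:neu} is met. Since $f_i$ is the constant function $d_i$, a subhypergraph $H\subseteq G$ is strictly $f_i$-degenerate if and only if $\col(H)\leq d_i$. Consequently, provided the (unique, since $G$ is connected) component of $G$ is not a hard pair with respect to $f$, Theorem~\ref{Theorem:Hauptsatz:neu} delivers a partition $(G_1,\ldots,G_p)$ with precisely the maximum-order properties asserted by the corollary.

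The core of the proof is therefore to characterize which hard pairs $(G,f)$ have $f$ constant and strictly positive in every coordinate. I would check the four defining conditions in turn. Condition~(1) is ruled out immediately, since it sets $f_i\equiv 0$ for all $i\neq j$, contradicting $d_i\geq 1$. Condition~(2) forces $G=tK_n$ with $n\geq 3$; positivity of each $d_i=tn_i$ forces $n_i\geq 1$, and $\sum d_i=t(n-1)=\Delta(tK_n)=\Delta$. This is exactly the first listed exception. Condition~(3) forces $G=tC_n$ with $n\geq 5$ odd and $f$ supported on exactly two coordinates; positivity forces $p=2$ and $d_1=d_2=t$, which together with the case $n=3$ (absorbed since $tC_3=tK_3$ falls under condition~(2) with $p=2$, $n_1=n_2=1$) yields the second listed exception.

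The remaining case, condition~(4), is the main obstacle: one must show that merging cannot create any new hard pair of the sought form. Here $(G,f)$ is built from hard pairs $(G^1,f^1)$ and $(G^2,f^2)$ by identifying $v^1\in V(G^1)$ with $v^2\in V(G^2)$, and $f^1$ agrees with the constant $(d_1,\ldots,d_p)$ on $V(G^1)\setminus\{v^1\}$. Arguing by induction on $|G|$ together with the analysis of (1)--(3) above, $(G^1,f^1)$ is either a trivial single-vertex type~(1) pair (so $f^1(v^1)=0$ and the merge is vacuous, i.e.\ $(G,f)\cong(G^2,f^2)$) or else a type~(2) or (3) hard pair on which $f^1$ is constant and equal to $(d_1,\ldots,d_p)$ throughout, including at $v^1$. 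In the latter case $f^1(v^1)+f^2(v^2)=(d_1,\ldots,d_p)$ forces $f^2(v^2)=0$, and the symmetric analysis of $(G^2,f^2)$ makes $G^2$ collapse to a single vertex, so the merge is again vacuous. Hence every connected $G$ outside the two listed exceptional forms admits a non–hard-pair $(G,f)$, and Theorem~\ref{Theorem:Hauptsatz:neu} yields the required partition, completing the argument.
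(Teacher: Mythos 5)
Your overall strategy is exactly the paper's intended one: the paper gives no explicit proof of this corollary, remarking only that it can be deduced easily from Theorem~\ref{Theorem:Hauptsatz:alt} and Theorem~\ref{Theorem:Hauptsatz:neu}, and the deduction is precisely your reduction to the constant vector function $f_i(v)=d_i$ followed by a classification of the hard pairs whose vector function is constant and positive in every coordinate. Your handling of conditions (1)--(3) is correct, including the observation that $tC_3=tK_3$ is absorbed into condition (2).

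The gap is in your treatment of condition (4). You assert that ``by induction on $|G|$'' the constituent hard pair $(G^1,f^1)$ must be of type (1), (2) or (3); but your induction hypothesis concerns hard pairs whose vector function is constant and positive at \emph{every} vertex, whereas $f^1$ is only known to equal $(d_1,\dots,d_p)$ on $V(G^1)\setminus\{v^1\}$ --- the value $f^1(v^1)$ is unconstrained, so the hypothesis does not apply to $(G^1,f^1)$, which could a priori itself be of type (4). The natural strengthening (``constant and positive except at one designated vertex'') is genuinely false: two copies of $tK_3$ glued at a vertex $u$, with $f=(t,t)$ off $u$ and $f(u)=(2t,2t)$, is a hard pair of exactly that shape, so the inductive step as you set it up does not close. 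A correct repair goes through the block structure of hard pairs: an easy induction on the number of merges shows that every block $B$ of a hard pair $(G,f)$ carries a ``brick function'' $f^B$ of type (1), (2) or (3) with $f(v)=\sum_{B\ni v}f^B(v)$. If $G$ had a cut vertex, pick an end-block $B$ with cut vertex $v$; since $B$ has a vertex $w\neq v$ lying in no other block, $f^B(w)=f(w)=(d_1,\dots,d_p)$ has all coordinates positive, which rules out type (1) and forces $f^B$ to be constant equal to $(d_1,\dots,d_p)$ on all of $B$, in particular at $v$. But then $f(v)$ dominates $f^B(v)+f^{B'}(v)$ for some other block $B'$ containing $v$, and $f^{B'}(v)\neq 0$ (a type (1) brick with $f^{B'}(v)=0$ would make $v$ isolated in the connected block $B'$), contradicting $f(v)=(d_1,\dots,d_p)$. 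Hence $G$ is a block, and your analysis of (1)--(3) then finishes the proof.
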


\end{document}